\newcolumntype{d}{D{.}{.}{-1}}
\theoremstyle {plain}
\newtheorem {theorem}{Theorem}[section]
\newtheorem {lemma}[theorem]{Lemma}
\theoremstyle {definition}
\newtheorem {definition}[theorem]{Definition}
\theoremstyle {remark}
\newtheorem {remark}[theorem]{Remark}
\newtheorem {example}[theorem]{Example}
\DeclareMathOperator{\im}{im}
\DeclareMathOperator{\LM}{LM}
\DeclareMathOperator{\Quot}{Q}
\begin{document}
\title{The use of Bad Primes in Rational Reconstruction}
\author{Janko B\"ohm}
\address{Fachbereich Mathematik\\
Technical University Kaiserslautern\\
Postfach 3049\\
67653 Kaiserslautern\\
Germany}
\email{boehm@mathematik.uni-kl.de}
\author{Wolfram Decker}
\address{Fachbereich Mathematik\\
Technical University Kaiserslautern\\
Postfach 3049\\
67653 Kaiserslautern\\
Germany}
\email{decker@mathematik.uni-kl.de}
\author{Claus Fieker}
\address{Fachbereich Mathematik\\
Technical University Kaiserslautern\\
Postfach 3049\\
67653 Kaiserslautern\\
Germany}
\email{fieker@mathematik.uni-kl.de}
\author{Gerhard Pfister}
\address{Fachbereich Mathematik\\
Technical University Kaiserslautern\\
Postfach 3049\\
67653 Kaiserslautern\\
Germany}
\email{pfister@mathematik.uni-kl.de}

\begin{abstract}
A standard method for finding a rational number from its values modulo a
collection of primes is to determine its value modulo the product of the
primes via Chinese remaindering, and then use Farey sequences for rational
reconstruction. Successively enlarging the set of primes if needed, this
method is guaranteed to work if we restrict ourselves to \textquotedblleft
good\textquotedblright\ primes. Depending on the particular application,
however, there may be no efficient way of identifying good primes.

In the algebraic and geometric applications we have in mind, the final result
consists of an a priori unknown ideal (or module) which is found via a
construction yielding the (reduced) Gr\"obner basis of the ideal. In this
context, we discuss a general setup for modular and, thus, potentially
parallel algorithms which can handle \textquotedblleft bad\textquotedblright%
\ primes. A key new ingredient is an error tolerant algorithm for rational
reconstruction via Gaussian reduction.

\end{abstract}
\keywords{rational reconstruction, Farey map}
\date{\today }
\maketitle

\section{Introduction}

Rational reconstruction, in conjunction with Chinese remaindering, provides a
standard way of obtaining results over the rational numbers from results in
characteristic $p>0$. This is of particular use in the design of parallel
algorithms and in situations where the growth of intermediate results matters.
Classical applications are the computation of polynomial greatest common
divisors (see \cite{W81,E95}) and Gr\"{o}bner bases (see \cite{arnold}). The
goal of the modular Gr\"{o}bner basis algorithm presented in \cite{arnold} is
to compute the Gr\"{o}bner basis of an ideal \emph{already known}. That is,
the ideal is given by a finite set of generators. In contrast, there are
constructions which yield \emph{a priori unknown} ideals by finding their
(reduced) Gr\"obner bases. Prominent examples are the computation of
normalization and the computation of adjoint curves (see \cite{BDLPSS,BDLSadj}%
). Here, for the purpose of modularization, we expect that the respective
construction applies to a given set of input data in characteristic zero as
well as to the
modular values of the input data. In such a situation, problems may arise
through the presence of \textquotedblleft bad\textquotedblright\ primes of
different types.

Usually, a first step to resolve these problems is to show that the bad primes
are \textquotedblleft rare\textquotedblright. Then the different types of bad
primes are addressed. For example, a prime for which it is a priori clear that
the modular construction does not work will simply be rejected. Depending on
the application, however, there may be bad primes which can only be detected a
posteriori, that is, after the true characteristic zero result has been found.
For such an application, we must ensure that the reconstruction algorithm used
will return the correct rational number even in the presence of bad primes. In
this note, we derive such an algorithm. Based on this algorithm, we describe a
general scheme for computing ideals (or modules) in the algebraic and
geometric applications we have in mind, addressing in particular how various
types of bad primes are handled. This scheme has already been successfully
used in the aforementioned papers \cite{BDLPSS,BDLSadj}.

To begin, in Section \ref{sec reconstruction of a rational number}, we recall
the classical approach to rational reconstruction which is based on the
lifting of modular to rational results by computing Farey preimages via
Euclidean division with remainder. Section \ref{sec types of bad primes}
contains a short discussion of the different types of bad primes. In Section
\ref{sec reconstruction with bad primes}, we present the new lifting algorithm
which is based on Gaussian reduction, and discuss the resulting error tolerant
reconstruction algorithm. Finally, in Section
\ref{sec applications in algebraic geometry}, we present our general scheme
for applications in commutative algebra and algebraic geometry. We finish by
giving an explicit example which involves a bad prime that can only be
detected a posteriori.

To summarize, we focus on the presentation of a general setup for modular
computations based on error tolerant rational reconstruction. We do not
discuss implementation details or performance questions. In fact, for the
applications we have in mind, the time used for rational reconstruction can be
neglected in view of the more involved parts of the respective algorithms.

\section{Reconstruction of a single rational
number\label{sec reconstruction of a rational number}}

We describe the reconstruction of a single unknown number $x\in\mathbb{Q}$.
In practical applications, this number will be a coefficient of an object to
be computed in characteristic $0$ (for example, a vector, polynomial, or
Gr\"{o}bner basis). Here, we suppose that we are able to verify the
correctness of the computed object (in some applications by a comparably easy
calculation, in others by a bound on the size of the coefficients).

We use the following notation: Given an integer $N\geq2$ and a number
$x=\frac{a}{b}\in\mathbb{Q}$ with $\gcd(a,b)=1$ and $\gcd(b,N)=1$, the
\emph{value of $x$ modulo $N$} is
\[
x_{N}:=\left(  \frac{a}{b}\right)  _{N}:=(a+N\mathbb{Z} )(b+N\mathbb{Z}%
)^{-1}\in\mathbb{Z}/N\mathbb{Z}.
\]
We also write $x\equiv r\bmod N$ if $r\in\mathbb{Z}$ represents $x_{N}$.

In what follows, we suppose that in the context of some application, we are
given an algorithm which computes the value of the unknown number
$x\in\mathbb{Q}$ modulo any prime $p$, possibly rejecting the prime. For
reference purposes, we formulate this in the black box type Algorithm
\ref{alg 1}.

\begin{algorithm}[H]
\caption{Black Box Algorithm $x\bmod p$}
\label{alg 1}
\begin{algorithmic}[1]
\REQUIRE A prime number $p$.
\ENSURE {\tt false} or an integer $0\leq s\leq p-1$ such that $x\equiv s\bmod p$.
\end{algorithmic}
\emph{Assumption:} There are only finitely many primes $p$ where the return value is {\tt false}.
\end{algorithm}

Once the values of $x$ modulo each prime in a sufficiently large set of primes
$\mathcal{P}$ have been computed, we may find $x$ via a lifting procedure
consisting of two steps: First, use Chinese remaindering to obtain the value
of $x$ modulo the product $N:=\prod_{p\in\mathcal{P}}p$. Second, compute the
preimage of this value under the \emph{Farey rational map} which is defined as follows.

For an integer $B>0$, set%
\[
F_{B}=\left\{  \frac{a}{b}\in\mathbb{Q}\mid\gcd(a,b)=1\text{, }0\leq a\leq
B\text{, }0<\left\vert b\right\vert \leq B\right\}  \text{,}%
\]
and for $m\in\mathbb{Z}/N\mathbb{Z}$, let%
\[
\mathbb{Q}_{N,m}=\left\{  \frac{a}{b}\in\mathbb{Q}\mid\gcd(a,b)=1\text{, }%
\gcd(b,N)=1\text{, }\left(  \frac{a}{b}\right)  _{N}=m\right\}
\]
be the set of rational numbers whose value modulo $N$ is $m$. Then
$\mathbb{Q}_{N}={%
{\textstyle\bigcup\nolimits_{m=0}^{N-1}}
}\mathbb{Q}_{N,m}$ is a subring of $\mathbb{Q}$ with identity. If $B$ is an
integer with $B\leq\sqrt{(N-1)/2}$, then the \emph{Farey map}%
\[
\varphi_{B,N}:F_{B}\cap\mathbb{Q}_{N}\rightarrow\mathbb{Z}/N\mathbb{Z}%
,\quad\frac{a}{b}\mapsto\left(  \frac{a}{b}\right)  _{N},
\]
is well--defined and injective (but typically not surjective). To obtain the
injective map with the largest possible image for a given $N$, we tacitly
suppose in what follows that $B$ is chosen as large as possible for $N$.

Algorithm \ref{algo:lift} below will return $\varphi_{B,N}^{-1}(\overline{r})$
if $\overline{r}$ is in the image of the Farey map, and \texttt{false}
otherwise (see, for example, \cite{KG83,W81,WGD, CE94}).
\begin{algorithm}[h]
\caption{Farey Preimage}
\label{algo:lift}
\begin{algorithmic}[1]
\REQUIRE Integers $N\geq2$ and $0\leq r\leq N-1$.
\ENSURE {\tt false} or a rational number $\frac{a}{b}$ with $\gcd(a,b)=1$, $\gcd(b,N)=1$,
$\frac{a}{b}\equiv r\bmod N$,  $0\leq a \leq\sqrt{(N-1)/2}$, $0<\left\vert b\right\vert \leq \sqrt{(N-1)/2}$.
\STATE $(a_{0},b_{0}):=(N,0)$, $(a_{1},b_{1}):=(r,1)$, $i:=-1$
\WHILE {$2a_{i+2}^{2} > N-1$ }
\STATE $i:=i+1$
\STATE divide $a_i$ by $a_{i+1}$ to find $q_{i},a_{i+2},b_{i+2}$ such that
\[
(a_{i},b_{i})=q_{i}(a_{i+1},b_{i+1})+(a_{i+2},b_{i+2})
\]
and $0\leq a_{i+2}<a_{i+1}$
\ENDWHILE
\IF {$2b_{i+2}^{2}\leq N-1$ \AND $\gcd(a_{i+2},b_{i+2})=1$} \RETURN $\frac{a_{i+2}}{b_{i+2}}$
\ENDIF
\RETURN {\tt false}
\end{algorithmic}
\end{algorithm}

Combining Algorithm \ref{algo:lift} with Chinese remaindering as indicated
above, we get the classical reconstruction Algorithm \ref{algo:main}.

\begin{algorithm}[h]
\caption{Reconstruction of a Rational Number}
\label{algo:main}
\begin{algorithmic}[1]
\REQUIRE Algorithm \ref{alg 1} and a way to verify that a
computed number equals $x$.
\ENSURE $x$.
\STATE $N:=1$, $r:=0$
\STATE $p:=2$
\LOOP
\STATE \label{algo:step:4} let $s$ be the return value of Algorithm
\ref{alg 1} applied to $p$
\IF {$s=$ {\tt false}}
\STATE continue with Step \ref{algo:step:3}
\ENDIF
\STATE find $1=eN+fp$ and set $r:=rfp+seN$, $N:=Np$
\STATE  let $y$ be the return value of Algorithm
\ref{algo:lift} applied to $N$ and $r$
\IF {$y=$ {\tt false}}
\STATE continue with step \ref{algo:step:3}
\ENDIF
\IF {$y=x$}
\RETURN $y$
\ENDIF
\STATE \label{algo:step:3} $p:=$NextPrime$(p)$
\ENDLOOP
\end{algorithmic}
\end{algorithm}

Note that Algorithm \ref{algo:main} works correctly since we suppose that our
Black Box Algorithm \ref{alg 1} either returns \texttt{false} or a correct
answer. For most applications, however, there exist primes $p$ which are bad
in the sense that the algorithm under consideration returns a wrong answer
modulo $p$ which can only be detected a posteriori. In this note, we show that
if there are only finitely many such primes, they can just be ignored. More
precisely, we show that in Algorithm \ref{algo:main}, we may call the black
box type Algorithm \ref{alg 1prime} below instead of Algorithm \ref{alg 1},
provided we then call the lifting Algorithm \ref{algo:lift new} from Section
\ref{sec reconstruction with bad primes} instead of Algorithm \ref{algo:lift}.

\begin{algorithm}[h]
\caption{Black Box Algorithm $x\bmod p$ With Errors}
\label{alg 1prime}
\begin{algorithmic}[1]
\REQUIRE A prime number $p$.
\ENSURE {\tt false} or an integer $0\leq s\leq p-1$.
\end{algorithmic}
\emph{Assumption:} There are only finitely many primes $p$ where either
the return value is {\tt false} or $x\not\equiv s\bmod p$.
\end{algorithm}

\section{Types of bad primes for modular algorithms
\label{sec types of bad primes}}

In this section, we suppose that we are given an algorithm implementing a
construction which applies in any characteristic, together with a set of input
data over the rationals.

We call a prime $p$ \emph{good} for the given algorithm and input data if the
algorithm applied to the modulo $p$ values of the input returns the reduction
of the characteristic zero result. Otherwise, the prime is called \emph{bad.}
In what follows, to make the discussion in the subsequent sections more clear,
we specify various types of bad primes and describe their influence on the
design of algorithms.

A prime $p$ is bad of \emph{type-1} if the modulo $p$ reduction of the
characteristic zero input is either not defined or for obvious reasons not a
valid input for the algorithm. For example, if the input is a polynomial,
type-1 primes arise from prime factors of a denominator of a coefficient.
Type-1 primes are harmless with regard to rational reconstruction as they can
be detected and, thus, rejected from the beginning, at no additional cost.

For a bad prime $p$ of \emph{type-2}, it turns out only in the course of the
construction that the computation in characteristic $p$ cannot be carried
through. For example, an algorithm for inverting a matrix will not work for a
prime dividing the determinant. Since, typically, the determinant is not
known, the failure will only turn out eventually. Type-2 primes waste
computation time, but with regard to rational reconstruction they are detected
before the Chinese remainder step and do, thus, not influence the final lifting.

Consider an invariant whose characteristic zero value coincides with the
characteristic $p$ value for all good primes $p$, and suppose that this value
is known a priori. Moreover assume that the algorithm computes this invariant
at some point of the construction.
Then a prime $p$ is bad of \emph{type-3} if the value in characteristic $p$
differs from the expected one. Like type-2 primes, bad primes of type-3 waste
computation time for computing modular results which will then be discarded,
but do not influence the final lifting. Examples of possible invariants are
the dimension or the degree or of a variety. Note that the computation of an
invariant for detecting a type-3 prime may be expensive. Dropping the
computation of the invariant in the design of the algorithm, if possible, will
turn a type-3 prime into a prime of different type. This includes primes of
type-5 below.

Now suppose that some invariant associated to the modular output is computed
by the algorithm, and that the a priori \emph{unknown} characteristic zero
value of this invariant coincides with the characteristic $p$ value for all
good primes $p$.
Then a prime is bad of \emph{type-4} if this invariant does not have the
correct value. Such a prime cannot be detected a priori. However, if there are
only finitely many such primes, they can be eliminated with arbitrarily high
probability by a majority vote over several primes. Type-4 bad primes may
occur, for example, in modular Gr\"obner basis computations, where we use the
leading ideal as an invariant for voting.
Handling type-4 primes is expensive not only since we waste computation time,
but also since we have to store the modular results for the majority vote.
Again, these primes are eventually detected and, hence do not enter the
Chinese remainder step.

Bad primes other than those discussed so far are called bad primes of
\emph{type-5}. This includes primes which cannot be discovered by any known
means without knowledge of the characteristic zero result. Example
\ref{ex type-5} below shows that type-5 bad primes indeed occur in algebraic
geometry. Type-5 bad primes enter the Chinese remainder step and are, thus,
present during the final lifting process. Considering Algorithm
\ref{algo:main}, calling black box Algorithm \ref{alg 1prime} instead of
Algorithm \ref{alg 1}, we will be in a situation where always either Algorithm
\ref{algo:lift} or the comparison \texttt{$x=y$} will return \texttt{false}.
As a result, the algorithm will not terminate.

Due to their nature, bad primes create hardly ever a \emph{practical} problem.
Typically, there are only very few bad primes for a given instance, and these
will not be encountered if the primes used are chosen at random. On the other
hand, for some of the modern algorithms in commutative algebra, we have no
\emph{theoretical} argument eliminating type-5 bad primes. Hence, we need
error tolerant reconstruction, which ensures termination provided there are
only finitely many bad primes.

\section{Reconstruction with bad
primes\label{sec reconstruction with bad primes}}

To design our error tolerant reconstruction algorithm, we turn rational
reconstruction into a lattice problem.

To begin with, given an integer $N\geq2$, we define the subset $C_{N}%
\subseteq\mathbb{Z}/N\mathbb{Z}$ of elements applied to which Algorithm
\ref{algo:lift new} below will return a rational number (and not
\texttt{false}). Let $C_{N}$ be the set of all $\overline{r}\in\mathbb{Z}%
/N\mathbb{Z}$ such that there are integers $u,v\in\mathbb{Z}$ with $u\geq0$,
$v\neq0$, and $\gcd(u,v)=1$ which satisfy the following condition:
\begin{equation}
\label{equ:def-CN}%
\begin{tabular}
[c]{c}%
there is an integer $q\geq1$ with $q|N$ and such that\\
\vspace{-0.2cm}\\
$u^{2}+v^{2}<\frac{N}{q^{2}}$\quad and\quad$u\equiv vr\bmod\frac{N}{q}$.\\
\end{tabular}
\end{equation}

In Lemma \ref{lem same ratio} below, we will prove that the rational number
$\frac{u}{v}=\frac{uq}{vq}\in\mathbb{Q}$ is uniquely determined by condition
\eqref{equ:def-CN}. Hence, we have a well--defined map
\[
\psi_{N}:C_{N} \rightarrow\mathbb{Q}.
\]
Note that the image of the Farey map $\varphi_{B,N}$, with $B=\left\lfloor
\sqrt{(N-1)/2}\right\rfloor $, is contained in $C_{N}$: If $\overline{r}
\in\im( \varphi_{B,N})$, then $\varphi_{B,N}^{-1}(\overline{r})$ satisfies
Condition \eqref{equ:def-CN} with $q=1$. Furthermore, $\varphi_{B,N}%
^{-1}(\overline{r}) = \psi_{N}(\overline{r})$.

Typically, the inclusion $\im( \varphi_{B,N})\subseteq C_{N}$ is strict:

\begin{example}
For $N=2\cdot13$, we have $B=3$, hence%
\[
\im( \varphi_{B,N}) =\left\{  \overline{0},\overline{1},\overline{2}%
,\overline{3},\overline{8},\overline{9},\overline{17},\overline{18}%
,\overline{23},\overline{24},\overline{25}\right\}  \text{,}%
\]
and the rational numbers which can be reconstructed by Algorithm
\ref{algo:lift} are the elements of%
\[
F_{B}\cap\mathbb{Q}_{N}=\left\{  0,\pm1,\pm2,\pm3,\pm\frac{1}{3},\pm\frac
{2}{3}\right\}  \text{.}%
\]
On the other hand,
\[
C_{N}=\left\{  \overline{r}\mid0\leq r\leq25\text{, }r\neq5,21\right\}  ,
\]
and Algorithm \ref{algo:lift new} will reconstruct the rational numbers in
\[
\psi_{N}(C_{N}) = \left\{  0,\pm1,\pm2,\pm3,\pm4,\pm\frac{1}{2},\pm\frac{1}%
{3},\pm\frac{2}{3},\pm\frac{4}{3}\right\}  .
\]
Note that the denominator of $\frac{1}{2}=\psi_{N}(7)=\psi_{N}(20)$ is not
coprime to $N$. In both cases, $q=2$: We have $1\equiv2\cdot7\bmod 13$ and
$1\equiv2\cdot20\bmod 13$.

\end{example}

Now, fix $0\leq r\leq N-1$ such that $\overline{r}\in C_{N}$, and consider the
lattice $\Lambda=\Lambda_{N,r}:=\langle(N,0),(r,1)\rangle$ of discriminant
$N$. Let $u,v,q$ correspond to $\overline{r}$ as in Condition
\eqref{equ:def-CN}. Then $(uq,vq)\in\Lambda_{N,r}$.

\begin{lemma}
\label{lem same ratio} With notation as above, all $(x,y)\in\Lambda$ with
$x^{2}+y^{2}<N$ are collinear. That is, they define the same rational number
$\frac{x}{y}$.

\end{lemma}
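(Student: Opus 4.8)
The plan is to argue by contradiction using the classical fact that a planar lattice of discriminant (covolume) $N$ cannot contain two linearly independent vectors that are both too short. Concretely, suppose $(x_1,y_1)$ and $(x_2,y_2)$ are two elements of $\Lambda$, each satisfying $x_i^2+y_i^2<N$, and suppose they are \emph{not} collinear. Then they form a basis of a full-rank sublattice $\Lambda'\subseteq\Lambda$, and the absolute value of the determinant
\[
\det\begin{pmatrix} x_1 & y_1\\ x_2 & y_2\end{pmatrix}=x_1y_2-x_2y_1
\]
is a positive integer which equals the index $[\Lambda:\Lambda']$ times the discriminant $N$ of $\Lambda$; in particular it is a nonzero multiple of $N$, so $|x_1y_2-x_2y_1|\geq N$. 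On the other hand, the area spanned by the two vectors is at most the product of their lengths, i.e.
\[
|x_1y_2-x_2y_1|\leq \sqrt{x_1^2+y_1^2}\cdot\sqrt{x_2^2+y_2^2}<\sqrt{N}\cdot\sqrt{N}=N,
\]
where the first inequality is Cauchy--Schwarz (the cross product of two plane vectors is bounded by the product of their norms). This contradicts $|x_1y_2-x_2y_1|\geq N$, so any two such vectors must be collinear.

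It remains to spell out two small points. First, I should note that $(0,0)$ is collinear with everything, so we may assume $x_i^2+y_i^2>0$; and if some vector has $y_i=0$ then, since $(x_i,y_i)\in\Lambda=\langle(N,0),(r,1)\rangle$ forces $x_i$ to be a multiple of $N$, the bound $x_i^2<N$ gives $x_i=0$, the zero vector again — so for nonzero vectors we may freely speak of the ratio $x/y$. Second, ``collinear'' should be read as: the vectors lie on a common line through the origin, equivalently $x_1y_2=x_2y_1$, equivalently $\frac{x_1}{y_1}=\frac{x_2}{y_2}$ as elements of $\mathbb{Q}\cup\{\infty\}$ (with the $y=0$ case already disposed of). This is exactly the assertion of the lemma.

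I do not expect a genuine obstacle here: the only thing to be careful about is the identification of $|x_1y_2-x_2y_1|$ with a multiple of the discriminant $N$ — one must invoke that $\Lambda$ has discriminant $N$ (stated just before the lemma, since $(N,0),(r,1)$ has determinant $N$) and that any two lattice vectors span a sublattice whose covolume is an integer multiple of the covolume of $\Lambda$. Both are standard, and the strict inequality in Condition \eqref{equ:def-CN} / the hypothesis $x_i^2+y_i^2<N$ is precisely what makes the Cauchy--Schwarz bound strict, closing the argument.
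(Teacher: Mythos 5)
Your proof is correct and rests on the same two facts as the paper's: the quantity $x_1y_2-x_2y_1$ is an integer multiple of $N$, and Cauchy--Schwarz bounds its absolute value strictly below $N$. The only difference is packaging: the paper obtains the divisibility by $N$ directly, by observing that $y_1(x_2,y_2)-y_2(x_1,y_1)=(y_1x_2-y_2x_1,0)$ lies in $\Lambda$ and that lattice vectors with vanishing second coordinate are exactly the multiples of $(N,0)$; you instead invoke the general index--covolume relation for the sublattice spanned by the two vectors. Both are standard and equivalent here, so this is not a genuinely different route, just a slightly more abstract phrasing of the same step.
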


\begin{proof}
Let $\lambda=(x,y),\, \mu=(c,d)\in\Lambda$ be vectors with $x^{2}+y^{2}%
,c^{2}+d^{2} < N$. Then $y\mu-d\lambda=(yc-xd,0)\in\Lambda$, so $N|(yc-xd)$.
Since $|yc-xd|<N$ by Cauchy--Schwarz, we get $yc=xd$, as claimed.
\end{proof}

Next, consider integers $N^{\prime},M\geq2$, with $\gcd(M,N^{\prime})=1$, and
such that $N=N^{\prime}M$. Let $a\geq0,b\neq0$ be integers such that
$\gcd(b,N^{\prime})=1$, and let $a\equiv bs\bmod N^{\prime}$, with $0\leq
s\leq N^{\prime}-1$. Let $0\leq t\leq M-1$ be another integer, and let $0\leq
r\leq N-1$ be the Chinese remainder lift satisfying $r\equiv s\bmod N^{\prime
}$ and $r\equiv t\bmod M$. In practical applications, we think of $N^{\prime}$
and $M$ as the product of good and bad primes, respectively. By the following
lemma, Algorithm \ref{algo:lift new} below applied to $N$ and $r$ will return
$a/b$ independently of the possibly \textquotedblleft wrong
result\textquotedblright\ $t$, provided that $M\ll N^{\prime}$.

\begin{lemma}
\label{lemma2} With notation as above,
suppose that $(a^{2}+b^{2})M<N^{\prime}$. Then, for all $(x,y)\in
\Lambda=\langle(N,0) , (r,1)\rangle$ with $(x^{2}+y^{2})<N$, we have $\frac
{x}{y}=\frac{a}{b}$. Furthermore, if $\gcd(a,b)=1$ and $(x,y)$ is a shortest
nonzero vector in $\Lambda$, we also have $\gcd(x,y)|M$.
\end{lemma}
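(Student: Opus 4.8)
The plan is to locate the vector $(aq_0, bq_0) \in \Lambda$ coming from the good-prime data, show it is short enough to fall under Lemma \ref{lem same ratio}, and then conclude that every sufficiently short vector in $\Lambda$ is collinear with it. First I would observe that $a \equiv bs \bmod N'$ together with $r \equiv s \bmod N'$ gives $a \equiv br \bmod N'$, so $(a,b)$ lies in the lattice $\langle (N',0), (r,1)\rangle$; multiplying by $M$ and using $N = N'M$ shows $(aM, bM) \in \Lambda$. Hence there certainly exist nonzero short vectors: the hypothesis $(a^2+b^2)M < N'$ gives $(aM)^2 + (bM)^2 = (a^2+b^2)M^2 < N'M = N$, so $(aM,bM)$ itself satisfies the hypothesis of Lemma \ref{lem same ratio}. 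Now for any $(x,y) \in \Lambda$ with $x^2 + y^2 < N$, Lemma \ref{lem same ratio} applied to the pair $(x,y)$ and $(aM,bM)$ yields $y \cdot aM = x \cdot bM$, i.e.\ $ya = xb$, which is exactly $\frac{x}{y} = \frac{a}{b}$ (note $bM \neq 0$ forces the ratios to agree; if $y = 0$ then $xb = 0$ so $x = 0$, contradicting $x^2+y^2 < N$ being nonzero — actually the zero vector is allowed, but then $\frac{x}{y}$ is not defined and the statement is vacuous for it, so I would restrict to nonzero $(x,y)$, matching the intent of ``collinear'').

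For the second assertion, suppose $\gcd(a,b) = 1$ and let $(x,y)$ be a shortest nonzero vector in $\Lambda$. By the first part, $\frac{x}{y} = \frac{a}{b}$, and since $\gcd(a,b)=1$ this means $(x,y) = k(a,b)$ for some nonzero integer $k$. I want to show $k \mid M$. The idea is to produce, inside $\Lambda$, the primitive vector $(a,b)$ scaled only by a divisor of $M$. Concretely, $(aM,bM) = M(a,b) \in \Lambda$, and I would determine the smallest positive multiple $d$ of $(a,b)$ lying in $\Lambda$: this $d$ divides $M$ because $M(a,b) \in \Lambda$. Then $(da, db)$ is a nonzero vector of $\Lambda$ of squared length $d^2(a^2+b^2) \le M^2(a^2+b^2) < N$, hence by minimality of $(x,y)$ we get $|k| \le d$; conversely $(x,y) = k(a,b) \in \Lambda$ and $d$ is the minimal such positive multiple, so $d \mid |k|$. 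Wait — I should be careful: I want $k \mid M$, and I have $d \mid M$ and $d \mid k$ and $|k| \le d$, which together force $|k| = d$, hence $k \mid M$. The one gap to fill is why $d \mid k$: this is the standard fact that the set of integers $\ell$ with $\ell(a,b) \in \Lambda$ is a subgroup of $\mathbb{Z}$, hence of the form $d\mathbb{Z}$, and $k$ lies in it.

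The main obstacle I anticipate is purely bookkeeping around degenerate cases — ensuring the vector $(aM,bM)$ is genuinely nonzero (which needs $a,b$ not both zero; $b \neq 0$ is given), handling $y = 0$ versus the convention for $\frac{x}{y}$, and making the ``smallest positive multiple'' argument airtight. None of these is deep: the real content is the two-line Cauchy--Schwarz estimate already packaged in Lemma \ref{lem same ratio}, plus the elementary structure of $\{\ell \in \mathbb{Z} : \ell(a,b) \in \Lambda\}$ as a subgroup of $\mathbb{Z}$. I would keep the write-up short, citing Lemma \ref{lem same ratio} for the collinearity and spending a couple of sentences on the divisibility $\gcd(x,y) \mid M$.
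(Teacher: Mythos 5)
Your proposal is correct and follows essentially the same route as the paper: you establish $(aM,bM)\in\Lambda$, verify that its squared norm $(a^2+b^2)M^2 < N'M = N$ puts it in range of Lemma~\ref{lem same ratio}, and invoke that lemma for the collinearity claim, then use shortest-vector minimality for the $\gcd$ statement. The only cosmetic difference is in the last step: the paper writes $\lambda X = A$ with $\lambda = s/t$ in lowest terms and uses a Bezout relation $et+sf=1$ to show $X/t\in\Lambda$, forcing $t=\pm1$, whereas you look at the subgroup $\{\ell\in\mathbb{Z} : \ell(a,b)\in\Lambda\}=d\mathbb{Z}$ and squeeze $|k|=d$ from $d\mid k$ together with $|k|\le d$; the two arguments encode the same lattice fact and either is a clean way to finish.
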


\begin{proof}
From $a\equiv bs\bmod N^{\prime}$, we get $a-bs=k_{1}N^{\prime}$ for some
$k_{1}$. Moreover, $s\equiv r\bmod N^{\prime}$ gives $r=s+k_{2}N^{\prime}$.
Now $(aM,bM)-bM(r,1)=(aM-brM,0)$ and $aM-brM=M(a-br)=M(a-b(s+k_{2}N^{\prime
}))=M(a-bs)-k_{2}bN=k_{1}N-k_{2}bN$, thus $(aM,bM)\in\Lambda$. Since
$(a^{2}+b^{2})M<N^{\prime}$, Lemma \ref{lem same ratio} gives $\frac{a}%
{b}=\frac{aM}{bM}=\frac{x}{y}$ for all $(x,y)\in\Lambda$ such that
$(x^{2}+y^{2})<N$.

For the second statement, write $A:=(aM,bM)$ and $X:=(x,y)$. By Lemma
\ref{lem same ratio}, there is a $\lambda=\frac{s}{t}\in\mathbb{Q}$, with
$\gcd(s,t)=1$, and such that $\lambda X=A$. The Euclidean Algorithm gives
integers $e,f$ with $et+sf=1$, hence%
\[
\frac{X}{t}=(et+sf)\frac{X}{t}=eX+fA\in\Lambda\text{.}%
\]
Since $X$ is a shortest vector in the lattice, it follows that $t=\pm1$, hence
$A=\pm sX$. Since $\gcd(a,b)=1$, we conclude that $\gcd(x,y)| M$.
\end{proof}

The use of Lemma \ref{lemma2} is twofold. From a theoretical point of view, it
allows us to ignore type-5 bad primes in the design of modular algorithms --
as long as there are only finitely many of them. This makes the design of
modular algorithms much simpler. From a practical point of view, it allows us
to avoid expensive computations of invariants to eliminate bad primes of any
type.
Moreover, factorizing the $\gcd$ of the components of a shortest lattice
element can help us to identify bad primes (see Example \ref{Ex:bad-primes}
below).

Lemma \ref{lemma2} yields the correctness of both the new lifting Algorithm
\ref{algo:lift new} and the resulting reconstruction Algorithm \ref{algo:main}%
, calling black box Algorithm \ref{alg 1prime} instead of Algorithm
\ref{alg 1}, and lifting Algorithm \ref{algo:lift new} instead of Algorithm
\ref{algo:lift}. In applications, the termination can be based either on the
knowledge of a priori bounds on the height of $\frac{x}{y}$ or on an a
posteriori verification of the result. It should be mentioned that both
methods are used: some problems allow for easy verification, while others
yield good bounds.

\begin{algorithm}[h]
\caption{Error Tolerant Lifting}
\label{algo:lift new}
\begin{algorithmic}[1]
\REQUIRE  Integers $N\geq2$ and $0\leq r\leq N-1$.
\ENSURE $\psi_N (\overline r )$ if $\overline r\in C_N$ and {\tt false} otherwise.
\STATE $(a_{0},b_{0}):=(N,0)$, $(a_{1},b_{1}):=(r,1)$, $ i:=-1$
\REPEAT
\STATE $i=i+1$
\STATE set
\[
q_{i}=\left\lfloor \frac{\langle (a_{i},b_{i}), (a_{i+1},b_{i+1})\rangle}{\Vert (a_{i+1}, b_{i+1})
\Vert^2}
\right\rceil
\]
\STATE set
\[
(a_{i+2},b_{i+2})=(a_{i},b_{i})-q_{i}(a_{i+1},b_{i+1})
\]
\UNTIL {$a_{i+2}^{2} + b_{i+2}^{2}\geq a_{i+1}^{2} + b_{i+1}^{2}$}
\IF {$a_{i+1}^{2} + b_{i+1}^{2}<N$}
\RETURN $\frac{a_{i+1}}{b_{i+1}}$
\ELSE
\RETURN {\tt false}
\ENDIF
\end{algorithmic}
\end{algorithm}

\begin{remark}
Algorithm \ref{algo:lift new}, which is just a special case of Gaussian
reduction, will always find a shortest vector in the lattice generated by
$(N,0)$ and $(r,1)$. Moreover, $b_{i}\neq0$ for all $i>0$ since in every step
the vector $(a_{i},b_{i})$ gets shorter and, hence, cannot be equal to $(N,0)$.

Even though Algorithm \ref{algo:lift new} looks more complicated than
Algorithm \ref{algo:lift}, the bit--complexity of both algorithms is the same:
$O(\log^{2}N)$. See the discussion in \cite[Section 3.3]{stehle}.
\end{remark}

\begin{example}
\label{Ex:bad-primes} We reconstruct the rational number $\frac{13}{12}$ using
the modulus%
\[
N=38885=5\cdot7\cdot11\cdot101\text{.}%
\]
With notation as above, $a=13$, $b=12$, $r=22684$, and the Farey bound is%
\[
B=\left\lfloor \sqrt{(N-1)/2}\right\rfloor =139\text{.}%
\]
Algorithm \ref{algo:lift} applied to this data will correctly return
$\frac{13}{12}$. Similarly for Algorithm \ref{algo:lift new} which generates
the sequence%
\begin{align*}
(38885,0)  &  =2\cdot(22684,1)+(-6483,-2),\\
(22684,1)  &  =-3\cdot(-6483,-2)+(3235,-5),\\
(-6483,-2)  &  =2\cdot(3235,-5)+(-13,-12),\\
(3235,-5)  &  =-134\cdot(-13,-12)+(1493,-1613).
\end{align*}

Now, bad primes will enter the picture. Consider the Chinese remainder
isomorphism%
\[
\chi:\mathbb{Z}/5\mathbb{Z\times Z}/7\mathbb{Z\times Z}/11\mathbb{Z\times
Z}/101\mathbb{Z\rightarrow Z}/38885\mathbb{Z}\text{.}%
\]
The preimage of $\overline{r}=\left(  \frac{13}{12}\right)  _{N}$ is
\[
\chi^{-1}(\overline{r})=(\overline{4},\overline{4},\overline{2},\overline
{60}).
\]
That is, $\overline{r}$ is the solution to the simultaneous congruences%
\begin{align*}
x  &  \equiv4\operatorname{mod}5\\
x  &  \equiv4\operatorname{mod}7\\
x  &  \equiv2\operatorname{mod}11\\
x  &  \equiv60\operatorname{mod}101\text{.}%
\end{align*}
If we make $101$ a bad prime by changing the congruence $x\equiv
60\operatorname{mod}101$ to $x\equiv61\operatorname{mod}101$, we obtain%
\[
\chi(\overline{4},\overline{4},\overline{2},\overline{61})=\overline
{16524}\text{.}%
\]
Algorithm \ref{algo:lift new} then computes%
\begin{align*}
(38885,0)  &  =2\cdot(16524,1)+(5837,-2),\\
(16524,1)  &  =3\cdot(5837,-2)+(-987,7),\\
(5837,-2)  &  =6\cdot(-987,7)+(-85,40),\\
(-987,7)  &  =10\cdot(-85,40)+(-137,393)\text{.}%
\end{align*}
Hence the output $\frac{85}{-40}=\frac{17}{8}\neq\frac{13}{12}$ is not the
desired lift. The reason for this is that $101$ is not small enough compared
to its cofactor in $N$. Algorithm \ref{algo:lift}, on the other hand, returns
\texttt{false} since the reduction process will also terminate with $(85,-40)$
and these numbers are not coprime.
Note that Algorithm \ref{alg generic lift for ideals} in Section
\ref{sec applications in algebraic geometry} below will detect an incorrect
lift either by the procedure \textsc{pTest} (with a very high probability) or
the subsequent verification step over the rationals (carried through only if
\textsc{pTest} returns \texttt{true}). As a consequence, in both cases, the
set of primes will be enlarged (without discarding previous results).
Eventually, the good primes will outweigh the bad ones and Algorithm
\ref{algo:lift new}, when called from Algorithm
\ref{alg generic lift for ideals}, will return the correct lift.

For example, replace the congruence $x\equiv4\operatorname{mod}7$ by
$x\equiv2\operatorname{mod}7$, so that
\[
\chi(\overline{4},\overline{2},\overline{2},\overline{60})=\overline
{464}\text{.}%
\]
Then Algorithm \ref{algo:lift new} yields%
\begin{align*}
(38885,0)  &  =84\cdot(464,1)+(-91,-84),\\
(464,1)  &  =-3\cdot(-91,-84)+(191,-251)\text{,}%
\end{align*}
and terminates with the correct lift%
\[
\frac{91}{84}=\frac{13}{12}\text{.}%
\]
Algorithm \ref{algo:lift}, on the other hand, will again return \texttt{false}
since the reduction also terminates with the numbers $(91,84)$ which are not coprime.

Since
\[
(13^{2}+12^{2})\cdot7<5\cdot11\cdot101,
\]
Lemma \ref{lemma2} shows that $7$ is small enough compared to its cofactor in
$N$. Hence, the wrong result $2$ modulo the bad prime $7$ does not influence
the result of the lift. In fact, all other possible congruences modulo $7$
will lead to the same output. Note that the bad prime can be detected as
$\gcd(91,84,N)=7$. Furthermore, note that in the example the lifting process
involving the bad prime requires fewer steps than the process relying on good
primes only.
\end{example}

\section{A setup for applications in algebra and
geometry\label{sec applications in algebraic geometry}}

In this section, we discuss a general computational setup for applications in
commutative algebra and algebraic geometry which requires error tolerance. A
setup of this type occurs, for example, when computing normalization or when
computing adjoint curves. See \cite{BDLPSS,BDLSadj} and Example
\ref{exa:norm-adj} below.

To begin, fix a global monomial ordering $>$ on the monoid of monomials in the
variables $X=\{X_{1},\ldots,X_{n}\}$. Consider the polynomial rings
$W={\mathbb{Q}}[X]$ and, given an integer $N\geq2$, $W_{N}=(\mathbb{Z}%
/N\mathbb{Z})[X]$. If $T\subseteq W$ or $T\subseteq W_{N}$ is a set of
polynomials, then denote by $\operatorname{LM}(T):=\{\operatorname{LM}(f)\mid
f\in T\}$ its set of leading monomials. If $f\in W$ is a polynomial such that
$N$ is coprime to any denominator of a coefficient of $f$, then its
\emph{reduction modulo $N$} is the polynomial $f_{N}\in W_{N}$ obtained by
mapping each coefficient $x$ of $f$ to $x_{N}$ as described in Section
\ref{sec reconstruction of a rational number}. If $H=\{h_{1},\dots
,h_{s}\}\subset W$ is a Gr\"{o}bner basis such that $N$ is coprime to any
denominator in any $h_{i}$, set $H_{N}=\{(h_{1})_{N},\dots,(h_{s})_{N}\}$. If
$J\subseteq W$ is any ideal, its \emph{reduction modulo $N$} is the ideal
\[
J_{N}=\left\langle f_{N}\mid f\in J\cap\mathbb{Z}[X]\right\rangle \subseteq
W_{N}\text{.}%
\]

\noindent\emph{\textsc{Notation:} From now on, let $I\subset W$ be a fixed
ideal. }

\begin{remark}
\label{rem:reduction-practical} For practical purposes, $I$ is given by a set
of generators. Fix one such set $f_{1},\ldots,f_{r}$. Then we realize the
reduction of $I$ modulo a prime $p$ via the following equality which holds for
all but finitely many primes $p$:
\[
I_{p}=\left\langle (f_{1})_{p},...,(f_{r})_{p}\right\rangle \subseteq W_{p}.
\]
More precisely, when running the modular Algorithm
\ref{alg generic lift for ideals} described below, we incorporate the
following: if one of the $(f_{i})_{p}$ is not defined (that is, $p$ is bad of
type-1 for the given set of generators), we reject the prime\footnote{Note
that rescaling to integer coefficients is not helpful: reducing the rescaled
generators may yield the wrong leading ideal. See Remark \ref{rmk arnold}.}.
If all $(f_{i})_{p}$ are defined, we work with the ideal on the right hand
side instead of $I_{p}$. Note that is possible to detect primes with
$I_{p}\neq\left\langle (f_{1})_{p},...,(f_{r})_{p}\right\rangle $ (which are
hence of type-3). Indeed, $I_{p}$ can be found using Gr\"obner bases (see
\cite[Cor. 4.4.5]{AL} and \cite[Lem. 6.1]{arnold}). However, we suggest to
skip this computation: finitely many bad primes will not influence the result
if we use error tolerant rational reconstruction as in Algorithm
\ref{algo:lift new}.

\end{remark}

To simplify our presentation in what follows, we will systematically ignore
the primes discussed in Remark \ref{rem:reduction-practical}. We suppose that
we are given a construction which associates to $I$ a uniquely determined
ideal $U(0)\subseteq W$, and to each reduction $I_{p}$, with $p$ a prime
number, a uniquely determined ideal $U(p)\subseteq W_{p}$, where we make the
following assumption:

\vspace{0.2cm}

\vspace{0.2cm}

\noindent\emph{\textsc{Assumption:} We ask that \mbox{$U(0)_{p}=U(p)$} for all
but finitely many $p$. }

\vspace{0.2cm}

We write $G(0)$ for the uniquely determined reduced Gr\"{o}bner basis of
$U(0)$, and $G(p)$ for that of $U(p)$. In the applications we have in mind, we
wish to construct the unknown ideal $U(0)$ from a collection of its
characteristic $p$ counterparts $U(p)$. Technically, given a finite set of
primes $\mathcal{P}$, we wish to construct $G(0)$ by computing the $G(p)$,
$p\in\mathcal{P}$, and lifting the $G(p)$ coefficientwise to characteristic
zero. Here, to identify Gr\"obner basis elements corresponding to each other,
we require that $\operatorname{LM}(G(p))=\operatorname{LM}(G(q))$ for all
$p,q\in\mathcal{P}$. This leads to condition (1b) below:

\begin{definition}
\label{defnLucky} With notation as above, we define:

\begin{enumerate}
\item A prime number $p$ is called \emph{lucky} if the following hold:

\begin{enumerate}
\item $U(0)_{p}=U(p)$ and

\item $\operatorname{LM}(G(0))=\operatorname{LM}(G(p))$.
\end{enumerate}

\noindent Otherwise $p$ is called \emph{unlucky}.

\item If $\mathcal{P}$ is a finite set of primes, set
\[
N^{\prime}=\prod_{p\in\mathcal{P}\text{ lucky}}p \hspace{0.5cm} \text{and}%
\hspace{0.5cm} M=\prod_{p\in\mathcal{P}\text{ unlucky}}p\text{.}
\]
Then $\mathcal{P}$ is called \emph{sufficiently large} if
\[
N^{\prime}> (a^{2}+b^{2}) \cdot M
\]
for all coefficients $\frac{a}{b}$ of polynomials in $G(0)$ (assume
$\mathop{gcd}(a,b)=1$).

\end{enumerate}
\end{definition}

Note that a prime $p$ violating condition (1a) is of type-5, while (1b) is a
type-4 condition.

\begin{remark}
\label{rmk arnold} A modular algorithm for the fundamental task of computing
Gr\"{o}bner bases is presented in \cite{arnold} and \cite{IPS}. In contrast to
our situation here, where we wish to find the ideal $U(0)$ by computing its
reduced Gr\"{o}bner basis $G(0)$, Arnold's algorithm starts from an ideal
which is already given. If $p$ is a prime number, $J\subseteq W$ is an ideal,
$H(0)$ is the reduced Gr\"{o}bner basis of $J$, and $H(p)$ is the reduced
Gr\"{o}bner basis of $J_{p}$, then $p$ is lucky for $J$ in the sense of Arnold
if $\operatorname{LM}(H(0))=\operatorname{LM}(H(p))$. It is shown in
\cite[Thm. 5.12 and 6.2]{arnold} that if $J$ is homogeneous and $p$ is lucky
for $J$ in this sense, then $H(0)_{p}$ is well--defined and equal to $H(p)$.
Furthermore, by \cite[Cor. 5.4 and Thm. 5.13]{arnold}, all but finitely many
primes are Arnold--lucky for a homogeneous $J$. Using weighted homogenization
as in the proof of \cite[Thm. 2.4]{IPS}, one shows that these results also
hold true in the non--homogeneous setup.
\end{remark}

\begin{example}
Consider the ideal $J=\langle\frac{\partial{f}}{\partial{x}},\frac{\partial
{f}}{\partial{y}}\rangle$, where $f=x^{5}+y^{11}+xy^{9}+x^{3}y^{9}$. The
leading terms of the lexicographical Gr\"obner basis with integral coprime
coefficients are as follows:
\begin{align*}
&  264627y^{39}+ \dots,\\
&  12103947791971846719838321886393392913750065060875xy^{8}-\dots,\\
&  40754032969602177507873137664624218564815033875x^{4}+\dots.
\end{align*}
Hence, the Arnold unlucky primes for $J$ are
\[
3, 5, 11, 809, 65179, 531264751, 431051934846786628615463393.
\]

\end{example}

With respect to our notion of lucky as introduced in Definition
\ref{defnLucky}.(1), we first observe:

\begin{lemma}
\label{lem finite} The set of unlucky primes is finite.
\end{lemma}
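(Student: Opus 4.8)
The plan is to show that each of the two defining conditions for luckiness excludes only finitely many primes, so that the union — the set of unlucky primes — is finite. Condition (1a) asks that $U(0)_p = U(p)$, and this is excluded from the outset by the standing \textsc{Assumption} that $U(0)_p = U(p)$ for all but finitely many $p$. So the only real work is condition (1b): I must show that $\operatorname{LM}(G(0)) = \operatorname{LM}(G(p))$ for all but finitely many primes $p$ among those satisfying (1a).

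First I would reduce (1b) to a statement about reduced Gröbner bases in the sense of Arnold. Fix a set of generators $h_1, \dots, h_s$ of $U(0)$ — for instance, take $H(0) = G(0)$ itself, the reduced Gröbner basis of $U(0)$. By the \textsc{Assumption}, for all but finitely many $p$ we have $U(0)_p = U(p)$; moreover, for all but finitely many $p$ the reduction $G(0)_p$ is defined (we exclude the finitely many primes dividing a denominator of a coefficient of some element of $G(0)$) and $U(0)_p = \langle G(0)_p \rangle = \langle (h_1)_p, \dots, (h_s)_p \rangle$. Now I invoke Arnold's luckiness result as recalled in Remark \ref{rmk arnold}: applied to the ideal $J := U(0)$, with $H(0) = G(0)$ its reduced Gröbner basis and $H(p)$ the reduced Gröbner basis of $U(0)_p$, all but finitely many primes are Arnold-lucky for $J$, i.e. satisfy $\operatorname{LM}(H(0)) = \operatorname{LM}(H(p))$; and for such $p$, $H(0)_p$ is well-defined and equals $H(p)$. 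Since $\operatorname{LM}(H(0)) = \operatorname{LM}(G(0))$ and, for $p$ also satisfying (1a), $H(p)$ is the reduced Gröbner basis of $U(0)_p = U(p)$, hence $H(p) = G(p)$, we get $\operatorname{LM}(G(0)) = \operatorname{LM}(G(p))$ for all but finitely many $p$.

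Putting this together: the set of unlucky primes is contained in the union of (i) the finite set of $p$ with $U(0)_p \neq U(p)$ (finite by the \textsc{Assumption}), (ii) the finite set of $p$ dividing a denominator of a coefficient of an element of $G(0)$, and (iii) the finite set of Arnold-unlucky primes for $U(0)$. A finite union of finite sets is finite, which proves the claim.

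The main obstacle is essentially bookkeeping about which set of generators and which Gröbner bases the Arnold machinery is being applied to: Arnold's results are stated for an ideal "already given" by generators, whereas here $U(0)$ is a priori unknown. The resolution is to note that once $U(0)$ exists as an abstract ideal, we are free to feed its reduced Gröbner basis $G(0)$ to Arnold's theorem as the defining generating set — the theorem does not care how we obtained the generators, only that they generate the ideal. The one subtlety to state carefully is the compatibility of reductions: that $\langle G(0)_p \rangle$ really equals $U(0)_p$ for all but finitely many $p$ (standard, cf. \cite[Cor. 4.4.5]{AL}, \cite[Lem. 6.1]{arnold}, as already cited in Remark \ref{rem:reduction-practical}), and that Arnold's $H(p)$ — the reduced Gröbner basis of $U(0)_p$ — coincides with $G(p)$ precisely when (1a) holds, since then $U(0)_p = U(p)$ and reduced Gröbner bases are unique.
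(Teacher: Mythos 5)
Your proof is correct, and it splits the problem exactly as the paper does: condition (1a) fails for only finitely many primes by the standing \textsc{Assumption}, so the only work is condition (1b). Where you diverge from the paper is in how you handle (1b). The paper gives a short, direct argument: for a prime $p$ with $U(0)_p = U(p)$, one runs Buchberger's criterion on $G(0)$ over $\mathbb{Q}$; if $p$ avoids the finitely many denominators appearing in that computation, the entire S-polynomial reduction descends modulo $p$, so $G(0)_p$ is a Gr\"obner basis of $U(0)_p = U(p)$ with the same leading monomials, and hence $\operatorname{LM}(G(p)) = \operatorname{LM}(G(0))$. You instead invoke Arnold's finiteness theorem as packaged in Remark \ref{rmk arnold}: feed $G(0)$ to Arnold's machinery as a generating set for $J = U(0)$, conclude that all but finitely many primes are Arnold-lucky for $U(0)$, and then identify Arnold's $H(p)$ with $G(p)$ using (1a) and uniqueness of reduced Gr\"obner bases. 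Both routes are valid. Yours leans on the imported theorem (whose own proof is ultimately a Buchberger-type argument, plus a homogenization step for the non-homogeneous case), and so requires the extra bookkeeping you flag — compatibility of $\langle G(0)_p\rangle$ with $U(0)_p$ — which the paper sidesteps by arguing directly at the level of S-polynomials. The paper's version is more self-contained and slightly leaner; your version makes explicit the logical dependence on Arnold's results that the paper already records in Remark \ref{rmk arnold}, which is a reasonable trade-off.
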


\begin{proof}
By our general assumption, $U(0)_{p}=U(p)$ for all but finitely many primes
$p$. Given a prime $p$ such that $U(0)_{p}=U(p)$, we have $\operatorname{LM}%
(G(0))=\operatorname{LM}(G(p))$ if $p$ does not divide the denominator of any
coefficient of any polynomial occurring when testing whether $G(0)$ is a
Gr\"{o}bner basis using Buchberger's criterion. The result follows.
\end{proof}

The type-5 condition (1a) cannot be checked a priori: We compute $G(p)$ and,
thus, $U(p)$ on our way, but $U(0)_{p}$ is only known to us after $G(0)$ and,
thus, $U(0)$ have been found. However, this is not a problem if we apply error
tolerant rational reconstruction: the finitely many bad primes leading to an
ideal $U(p)$ different from $U(0)_{p}$ will not influence the final result:

\begin{lemma}
If $\mathcal{P}$ is a sufficiently large set of primes satisfying condition
(1b), then the reduced Gr\"{o}bner bases $G(p)$, $p\in\mathcal{P}$, lift via
Algorithm \ref{algo:lift new} to the reduced Gr\"{o}bner basis $G(0)$.
\end{lemma}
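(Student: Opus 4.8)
The plan is to reduce the statement, coefficient by coefficient, to a direct application of Lemma \ref{lemma2}, with the majority-vote setup of Definition \ref{defnLucky} providing exactly the numerical hypothesis $(a^2+b^2)M<N'$ that the lemma requires. First I would split $\mathcal{P}$ into its lucky primes, whose product is $N'$, and its unlucky primes, whose product is $M$, so that $N=N'M$ and $\gcd(N',M)=1$. For every lucky prime $p$ we have, by condition (1a), $U(0)_p=U(p)$, and since $p$ is Arnold-lucky for $U(0)$ in the sense recalled in Remark \ref{rmk arnold} (this follows from condition (1b) together with Lemma \ref{lem finite}'s argument — for all but finitely many primes $\operatorname{LM}(G(0))=\operatorname{LM}(G(p))$, and the set $\mathcal{P}$ is chosen among these), the reduced Gröbner basis reduces well: $G(0)_p = G(p)$. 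In particular $\operatorname{LM}(G(p))=\operatorname{LM}(G(0))$ is the same for all $p\in\mathcal{P}$, so the Gröbner basis elements, and hence the monomials occurring in them, are in bijective correspondence across all primes in $\mathcal{P}$, and the lifting proceeds monomial slot by monomial slot.

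Next I would fix one such monomial slot, i.e. one polynomial $g\in G(0)$ and one monomial $X^\alpha$ occurring in it, with coefficient $\tfrac{a}{b}\in\mathbb{Q}$, $\gcd(a,b)=1$. For each lucky prime $p$ the corresponding coefficient of the corresponding element of $G(p)=G(0)_p$ in the slot $X^\alpha$ is exactly $\left(\tfrac{a}{b}\right)_p$; reducing $G(0)$ modulo $N'$ via CRT, the coefficient modulo $N'$ is $\left(\tfrac{a}{b}\right)_{N'}$, i.e. there is $0\le s\le N'-1$ with $a\equiv bs\bmod N'$ (note $\gcd(b,N')=1$ since no lucky prime divides the denominator $b$, again by the argument of Lemma \ref{lem finite}). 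For the unlucky primes we have no control: Chinese remaindering over all of $\mathcal{P}$ produces some residue $0\le r\le N-1$ with $r\equiv s\bmod N'$ and $r\equiv t\bmod M$ for some uncontrolled $0\le t\le M-1$. This is precisely the data feeding Lemma \ref{lemma2}.

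Then I would invoke Lemma \ref{lemma2} directly: the hypothesis $(a^2+b^2)M<N'$ is exactly the defining inequality of ``sufficiently large'' in Definition \ref{defnLucky}.(2), applied to the coefficient $\tfrac{a}{b}$. Algorithm \ref{algo:lift new} applied to $N$ and $r$ performs Gaussian reduction on the lattice $\Lambda=\langle(N,0),(r,1)\rangle$ and, by the Remark following it, terminates with a shortest nonzero vector $(x,y)$; this vector satisfies $x^2+y^2<N$ (indeed $(a^2+b^2)M<N'$ gives $(aM)^2+(bM)^2=(a^2+b^2)M\cdot M<N'M=N$, and $(aM,bM)\in\Lambda$ by the lemma's proof, so the shortest vector is at least this short). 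Hence Lemma \ref{lemma2} yields $\tfrac{x}{y}=\tfrac{a}{b}$, and since $\gcd(a,b)=1$ and $(x,y)$ is shortest we also get $\gcd(x,y)\mid M$, so that $\tfrac{x}{y}$ written in lowest terms recovers $\tfrac{a}{b}$ exactly. Thus Algorithm \ref{algo:lift new} returns the coefficient $\tfrac{a}{b}$ of $G(0)$ in this slot. Ranging over all monomial slots of all elements of $G(0)$ reconstructs $G(0)$ in full.

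The main obstacle I anticipate is not the lattice argument — that is handed to us by Lemma \ref{lemma2} — but the bookkeeping that makes ``lift coefficientwise'' well-posed: one must be sure that for the lucky primes the reduced Gröbner basis $G(p)$ really is the reduction $G(0)_p$ (so that each modular coefficient is genuinely $\left(\tfrac{a}{b}\right)_p$ for the right $\tfrac{a}{b}$), and that the leading-monomial sets agree across $\mathcal{P}$ so the slots line up; both of these rest on invoking Remark \ref{rmk arnold} and the denominator-avoidance argument of Lemma \ref{lem finite}, and deserve an explicit sentence. Once that is in place, the rest is a slot-by-slot application of Lemma \ref{lemma2} with the inequality supplied verbatim by Definition \ref{defnLucky}.
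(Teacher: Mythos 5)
Your proposal is correct and takes essentially the same route as the paper: for the lucky primes in $\mathcal{P}$ one has $G(0)_p=G(p)$ by Arnold's theorem (Remark \ref{rmk arnold}), so their CRT product $N'$ carries the true residues of each coefficient $\tfrac{a}{b}$ of $G(0)$; condition (1b) aligns the leading monomials and hence the coefficient slots; and the defining inequality of ``sufficiently large'' is precisely the hypothesis $(a^2+b^2)M<N'$ of Lemma \ref{lemma2}, applied slot by slot. In fact you are somewhat more careful than the paper's printed proof, which begins ``If a prime $p$ satisfies (1b), then $p$ is Arnold-lucky for $U(0)$; hence $G(0)_p=G(p)$'' — a step that, read literally, only holds once (1a) is also assumed, since $G(p)$ is the reduced Gr\"obner basis of $U(p)$ rather than of $U(0)_p$. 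Your explicit split of $\mathcal{P}$ into lucky primes (where $G(0)_p=G(p)$) and the remaining primes (absorbed into $M$) is the reading the paper intends, and your remark that $\gcd(b,N')=1$ because $G(0)_p$ is defined for lucky $p$ is a detail worth spelling out, as you do.
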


\begin{proof}
If a prime $p$ satisfies (1b), then $p$ is Arnold--lucky for $U(0)$. Hence, as
remarked above, $G(0)_{p}=G(p)$. Since $\mathcal{P}$ is sufficiently large, by
Lemma \ref{lemma2}, the coefficients of the Chinese remainder lift $G(N)$,
$N={\textstyle\prod\nolimits_{p\in\mathcal{P}}}$, have a lift to
characteristic zero. By the uniqueness statement of Lemma \ref{lemma2}, this
lift coincides with $G(0)$.

\end{proof}

Lemma \ref{lem finite} guarantees, in particular, that a sufficiently large
set $\mathcal{P}$ of primes satisfying condition (1b) exists. So from a
theoretical point of view, the idea of finding $G(0)$ is now as follows:
Consider such a set $\mathcal{P}$, compute the reduced Gr\"{o}bner bases
$G(p)$, $p\in\mathcal{P}$, and lift the results to $G(0)$ as described above.

From a practical point of view, we face the problem that we cannot test a
priori whether $\mathcal{P}$ is sufficiently large and satisfies condition
(1b). However, to remedy the situation, we can proceed in the following
randomized way:

First, fix an integer $t\geq1$ and choose a set of $t$ primes $\mathcal{P}$ at
random. Second, compute $\mathcal{GP}=\{G(p)\mid p\in\mathcal{P}\}$, and use a
majority vote with respect to the type-4 condition (1b):

\vspace{0.2cm}

\noindent\emph{\textsc{deleteByMajorityVote:} Define an equivalence relation
on $\mathcal{P}$ by setting $p\sim q:\Longleftrightarrow\LM(G(p))=\LM(G(q))$.
Then replace $\mathcal{P}$ by an equivalence class of largest
cardinality\footnote{When computing the cardinality, take Remark
\ref{rem:cardinality} into account.}, and change $\mathcal{GP}$ accordingly.}

\vspace{0.2cm}

Now all Gr\"obner bases in $\mathcal{GP}$ have the same leading ideal. Hence,
we can lift the Gr\"{o}bner bases in $\mathcal{GP}$ to a set of polynomials
$G\subseteq W$. Furthermore, if $\mathcal{P}$ is suffciently large, all primes
in $\mathcal{P}$ satisfy (1b). However, since we do not know whether
$\mathcal{P}$ is sufficiently large, a final verification in characteristic
zero is needed. As this may be expensive, especially if $G\neq G(0)$, we first
perform a test in positive characteristic:

\vspace{0.2cm}

\noindent\emph{\textsc{pTest:} Randomly choose a prime number $p\notin
\mathcal{P}$ such that $p$ does not divide the numerator or denominator of any
coefficient occurring in a polynomial in $G$ or $\left\{  {f_{1}}%
,\ldots,{f_{r}}\right\} $. Return \texttt{true} if $G_{p}=G(p)$, and
\texttt{false} otherwise.}

\vspace{0.2cm}

If \textsc{pTest} returns \texttt{false}, then $\mathcal{P}$ is not
sufficiently large or the extra prime chosen in \textsc{pTest} is bad. In this
case, we enlarge the set $\mathcal{P}$ by $t$ primes not used so far, and
repeat the whole process. If \textsc{pTest} returns \texttt{true}, however,
then most likely $G=G(0)$. Hence, it makes sense to verify the result over the
rationals. If the verification fails, we enlarge $\mathcal{P}$ and repeat the process.

We summarize this approach in Algorithm \ref{alg generic lift for ideals}
(recall that we ignore the primes from Remark \ref{rem:reduction-practical} in
our presentation).

\begin{algorithm}[h]
\caption{Reconstruction of an Ideal}
\label{alg generic lift for ideals}
\begin{algorithmic}[1]
\REQUIRE An algorithm  to compute $G(p)$ from $I_{p}$, for each prime $p$,
and a way of verifying that a computed Gr\"obner basis equals $G(0)$.
\ENSURE The Gr\"{o}bner basis $G(0)$.
\STATE choose a list $\mathcal{P}$ of random primes
\STATE $\mathcal{GP}=\emptyset$
\LOOP
\FOR {$p\in\mathcal{P}$}
\STATE compute $G(p)\subseteq W_{p}$
\STATE $\mathcal{GP}=\mathcal{GP}\cup\left\{  G(p)\right\}  $
\ENDFOR
\STATE $(\mathcal{P},\mathcal{GP})=\text{\emph{\textsc{deleteByMajorityVote}}%
}(\mathcal{P},\mathcal{GP})$
\STATE lift $\mathcal{GP}$ to $G\subseteq W$ via Chinese remaindering and
Algorithm \ref{algo:lift new}
\IF {the lifting succeeds and \emph{\textsc{pTest}}$(I,G,\mathcal{P})$}
\IF {$G=G(0)$}
\RETURN $G$
\ENDIF
\ENDIF
\STATE enlarge $\mathcal{P}$ with primes not used so far
\ENDLOOP
\end{algorithmic}
\end{algorithm}

\begin{remark}
\label{rem:cardinality}

If Algorithm \ref{alg generic lift for ideals} requires more than one round of
the loop, we have to use a weighted cardinality count in
\textsc{{deleteByMajorityVote}}: when enlarging $\mathcal{P}$, the total
weight of the elements already present must be strictly smaller than the total
weight of the new elements. Otherwise, though highly unlikely in practical
terms, it may happen that only unlucky primes are accumulated.

\end{remark}

\begin{remark}
Our lifting process works since reduced Gr\"{o}bner bases are uniquely
determined. In practical terms, however, there is often no need to reduce the
Gr\"{o}bner bases involved: it is only required that the construction
associating the Gr\"{o}bner bases to $I$ and its reductions yields uniquely
determined results.
\end{remark}

\begin{remark}
We may allow that the given construction does not work for finitely many
primes $p$ (which are then bad of type-2). In this case, the respective primes
will be rejected.
\end{remark}

\begin{remark}
Depending on the particular implementation of the construction, type-3 primes
(in addition to those considered in Remark \ref{rem:reduction-practical}) may
occur. In such a situation, it is often cheaper to rely on error tolerance
rather than spending computation time to detect these primes.
\end{remark}

\begin{example}
\label{exa:norm-adj} If $K$ is any field, and $I\subseteq K[X]$ is a prime
ideal, the \emph{normalization} $\overline{A}$ of the domain $A=K[X]/I$ is the
integral closure of $A$ in its field of fractions $\Quot(A)$. If $K$ is
perfect, the normalization algorithm given in \cite{GLS} will find a
\textquotedblleft valid denominator\textquotedblright\ $d\in A$ and an ideal
$U\subseteq A$ such that $\frac{1}{d}U=\overline{A}\subseteq\Quot(A)$. In
fact, $U$ is uniquely determined if we fix $d$. In practical terms, $d$ and
$U$ are a polynomial and an ideal in $K[X]$, respectively. If $K=\mathbb{Q}$,
we can apply the modular version of the algorithm (see \cite{BDLPSS}). This
version relies on choosing a \textquotedblleft universal
denominator\textquotedblright\ $d$ which is used over the rationals as well as
in finite characteristic. Given a prime number $p$, it may then happen that
$I_{p}$ is not a prime ideal (a type-2 condition), that the leading ideal of
$I_{p}$ does not coincide with that of $I$ (a type-3 condition), that $d_{p}$
is not defined (a type-1 condition), or that $d_{p}$ is not a valid
denominator (a type-2 condition). In accordance with the general setup, the
numerator ideal $U(p)$ is obtained by computing the Gr\"{o}bner basis $G(p)$,
and $\operatorname{LM}(G(0))=\operatorname{LM}(G(p))$ and $U(0)_{p}=U(p)$ are
type-4 and type-5 conditions, respectively.

\end{example}

The normalization algorithm mentioned in the previous Example
\ref{exa:norm-adj} finds $\overline{A}$ by successively enlarging $A$ in form
of an endomorphism ring of a so called test ideal $J\subseteq A$. For
practical purposes, the radical of the Jacobian ideal is used for $J$. The
following example shows that, for the algorithm computing the radical of the
Jacobian, bad primes $p$ exist which satisfy the type-4 condition (1b) but
violate the type-5 condition (1a) in Definition \ref{defnLucky}. In
particular, these primes cannot be eliminated by a majority vote on the
leading ideal.

\begin{example}
\label{ex type-5}We construct a sextic curve $C=V(I)\subset\mathbb{P}%
_{\mathbb{C}}^{2}$, given by an ideal $I=\left\langle f\right\rangle
\subset\mathbb{Q}[x,y,z]$, such that $5$ is a bad prime of type-5 for
computing the radical of the singular locus of $C$. The basic idea is to
construct a curve which has two double points in characteristic $0$, which
coincide when reducing modulo $5$, while one additional double point appears.

For%
\[
L=\left\langle y,\text{ }x-4z\right\rangle ^{2}\cap\left\langle y,\text{
}x+6z\right\rangle ^{2}\subseteq\mathbb{Q}[x,y,z]
\]
the reduced Gr\"{o}bner basis with respect to the degree reverse
lexicographical ordering is%
\[
G=\left(  y^{2},\text{ }(x-4z)(x+6z)y,\text{ }(x-4z)^{2}(x+6z)^{2}\right)
\text{.}%
\]
Note, that both $L$ and%
\[
L_{5}=\left\langle y^{2},\text{ }(x+z)^{2}y,\text{ }(x+z)^{4}\right\rangle
\subseteq\mathbb{F}_{5}[x,y,z]
\]
have the same leading monoid $\left\langle y^{2},x^{2}y,x^{4}\right\rangle $.

Writing generators of%
\[
M=L_{5}\cap\left\langle y,\text{ }x-z\right\rangle ^{2}\subset L_{5}%
\]
in terms of the Gr\"{o}bner basis of $L_{5}$ yields the representation%
\[
M=\left\langle y^{2},\text{ }(x-z)\cdot(x+z)^{2}y,\text{ }(x^{2}%
+3xz+z^{2})\cdot(x+z)^{4}\right\rangle \text{.}%
\]
Now consider a generic homogeneous element of degree $6$ in%
\[
\left\langle G_{1},\text{ }(x-z)\cdot G_{2},\text{ }(x^{2}+3xz+z^{2})\cdot
G_{3}\right\rangle \subset\mathbb{Q}[x,y,z]\text{.}%
\]
For practical puposes, a random element will do, for example,%
\[
f=x^{6}+y^{6}+7x^{5}z+x^{3}y^{2}z-31x^{4}z^{2}-224x^{3}z^{3}+244x^{2}%
z^{4}+1632xz^{5}+576z^{6}\text{.}%
\]

For the ideal $I=\left\langle f\right\rangle $ the prime $5$ is bad of type-5
with respect to the algorithm\vspace{-0.01cm}
\[
I\mapsto\sqrt{I+\operatorname{Jac}(I)}\text{,}%
\]
where $\operatorname{Jac}(I)$ denotes the Jacobian ideal of $I$: First note,
that no coefficient of $f$ is divisible by $5$. In particular,
$\operatorname{LM}(I)=\left\langle x^{6}\right\rangle =\operatorname{LM}%
(I_{5})$, so $5$ is Arnold--lucky for $I$. We compute%
\begin{align*}
U(0) &  =\sqrt{I+\operatorname{Jac}(I)}=\left\langle y,x-4z\right\rangle
\cap\left\langle y,x+6z\right\rangle \text{,}\\
U(5) &  =\sqrt{I_{5}+\operatorname{Jac}(I_{5})}=\left\langle y,x^{2}%
-z^{2}\right\rangle =\left\langle y,x-z\right\rangle \cap\left\langle y,\text{
}x+z\right\rangle \text{,}%
\end{align*}
and
\[
U(0)_{5}=\left\langle y,(x+z)^{2}\right\rangle \text{.}%
\]
Hence%
\[
\operatorname{LM}(U(0))=\left\langle y,x^{2}\right\rangle =\operatorname{LM}%
(U(5))\text{,}%
\]
but%
\[
U(0)_{5}\neq U(5)\text{.}%
\]

\end{example}

\vspace{0.1in} \noindent\emph{Acknowledgements}. We would like to thank the
referees who made valuable suggestions to improve the presentation of this paper.

\end{document}